\newcommand{\Mon}{\mbox{Mon}\,}
\newcommand{\Supp}{\mbox{Supp}\,}
\newcommand{\astab}{\mbox{astab}\,}
\newcommand{\Ind}{\mbox{Ind}\,}
\newcommand{\T}{\mathrm}
\newtheorem{thm}{Theorem}[section]
\newtheorem{cor}[thm]{Corollary}
\newtheorem{lem}[thm]{Lemma}
\newtheorem{defn}[thm]{Definition}
\newtheorem{rems}[thm]{Remarks}
\numberwithin{equation}{section}
\tikzstyle{Cgray}=[draw=black, scale = .4,circle, fill = white, minimum size=7mm]
\tikzstyle{Cwhite}=[scale = .8,circle, fill = white, minimum size=8mm]
\tikzstyle{Cblack}=[scale = .3,circle, fill = black, minimum size=3mm]
\tikzstyle{C0}=[scale = .9,circle, fill = black!0, inner sep = 0pt, minimum size=3mm]
\tikzstyle{C1}=[scale = .7,circle, fill = black!0, inner sep = 0pt, minimum size=3mm]
\tikzstyle{CW}=[scale = .7,circle, fill = white!0, inner sep = 0pt, minimum size=7mm]
\tikzstyle{Cg}=[scale = .8,circle, fill = gray, minimum size=8mm]
\begin{document}
\bibliographystyle{amsplain}

\title[A family of monomial ideals with the persistence property]{A family of monomial ideals with the persistence property}
\author[S. Moradi, M. Rahimbeigi, F. Khosh-Ahang, A. Soleyman Jahan]{Somayeh Moradi, Masoomeh Rahimbeigi, Fahimeh Khosh-Ahang and Ali Soleyman Jahan}
\subjclass{13D02, 13P10, 13D40, 13A02}
\address{Somayeh Moradi, Department of Mathematics, School of Science, Ilam University, P.O.Box 69315-516, Ilam, Iran.}
 \email{somayeh.moradi1@gmail.com}

\address{Masoomeh Rahimbeigi, Department of Mathematics, University of Kurdistan, Post Code
66177-15175, Sanandaj, Iran.}
\email{rahimbeigi$_{-}$masoome@yahoo.com}

\address{Fahimeh Khosh-Ahang, Department of Mathematics, School of Science,
Ilam University, P.O.Box 69315-516, Ilam, Iran.}
\email{fahime$_{-}$khosh@yahoo.com}

\address{Ali Soleyman Jahan, Department of Mathematics, University of Kurdistan, Post Code
66177-15175, Sanandaj, Iran.}
\email{solymanjahan@gmail.com}

\keywords{monomial ideal, normally torsion-free, path graph, persistence property.}
\subjclass[2010]{Primary 13D02, 13F55;    Secondary 16E05}

\begin{abstract}
\noindent
In this paper we introduce a family of monomial ideals with the persistence property. Given positive integers $n$ and $t$, we consider the monomial ideal $I=Ind_t(P_n)$ generated by all monomials $\textbf{x} ^F$, where $F$ is an independent set of vertices of the path graph $P_n$ of size $t$, which is indeed the facet ideal of the $t$-th skeleton of the independence complex of $P_n$. We describe the set of associated primes of all powers of $I$ explicitly. It turns out that any such ideal $I$ has the persistence property. Moreover the index of stability of $I$ and the stable set of associated prime ideals of $I$ are determined.
\end{abstract}

\maketitle

\section*{Introduction}
Brodmann \cite{B} showed that for an ideal $I$ in a Noetherian ring $R$,  there exists a positive integer $ k_0$  such that $ \T{Ass} (I^{k}) = \T{Ass} (I^{k_0}) $ for all $  k\geq k_0 $, where $\T{Ass}(I)$ denotes the set of associated prime ideals of $R/I$.
A minimal such $k_0$ is called the \textit{index of stability} of $ I $ and $\T{Ass}(I^{k_0})$ is called the \textit{stable set of associated prime ideals} of $I$.
Although it is known that the sets $\T{Ass} (I^{k})$ stabilize for large $k$, their behavior for small values of $k$ may be irregular. Indeed a prime ideal in $\T{Ass} (I^{k+1})$ may not belong to $\T{Ass} (I^{k})$. An ideal $I$ is said to satisfy the \textit{persistence property} if  $\T{Ass}(I^{k})\subseteq \T{Ass} (I^{k+1})$ for any $k\geq 1$. Characterizing ideals with the persistence property, computing the stable set of associated prime ideals of a graded ideal in a polynomial ring $R$ and finding upper bounds for the index of stability of $I$ depending only on $R$ are some widely open questions in the context of Brodmann's theorem.  Although having the persistence property is a highly
desirable property, little is known about the ideals satisfying this property. Even for squarefree monomial ideals this property does not hold in general. Indeed H\`{a} and Sun \cite{HS}, presented a family of squarefree monomial ideals whose associated prime ideals of their powers do not form an ascending chain. In this regard finding classes of monomial ideals with the persistence property is of great interest. Few classes of monomial ideals are known to possess it. Edge ideals of graphs (\cite{MMV}), cover ideals of perfect graphs, cover ideals of clique graphs (\cite{FHV1}) and polymatroidal ideals (\cite{HRV}) are some families of ideals with this property.

In this paper we introduce a family of squarefree monomial ideals associated to path graphs and determine the associated primes of all powers of these ideals explicitly and show that these ideals satisfy the persistence property. Also we obtain the index of stability and the stable set of associated primes of such ideals precisely and characterize torsion-free ideals of this type.

Let $R=K[x_{1},\ldots, x_{n}]$ be a polynomial ring over a field $ K $ and $P_n:x_1,\ldots,x_n$ be a path graph on $n$ vertices. For any positive integer $t$, we consider the ideal
$$I=\Ind_t(P_n)=\langle\textbf{x} ^F:\ F\ \textit{is an independent set of vertices of the path graph}\ P_n\ \textit{of size}\ t\rangle,$$
where $\textbf{x} ^F=\prod_{x_i\in F}x_i$.
The main result of this paper is the following theorem which determines $\T{Ass}(I^k)$ explicitly for any non-zero ideal $I$ of this form and any positive integer $k$.

\begin{thm}\label{main}
Let $n, t>1$ and $k$ be positive integers such that $n\geq 2t-1$.
\begin{itemize}
\item[(i)] If $n=2t-1$, then $$\T{Ass}(I^k)=\T{Ass}(I)=\{\langle x_1\rangle,\langle x_3\rangle,\ldots,\langle x_{2t-1}\rangle\}.$$
\item[(ii)] If $n=2t$, then $$\T{Ass}(I^k)=\T{Ass}(I)=\{\langle x_{i_1},x_{i_2}\rangle:\ i_1<i_2,\ \textit{$i_1$ is odd and $i_2$ is even}\}.$$
\item[(iii)] If $n>2t$, then
\begin{multline*}
 \T{Ass}(I^k)=\{\langle x_{i_1},x_{i_2},\ldots,x_{i_{n-2t+2\ell}}\rangle:\ i_1<i_2<\cdots<i_{n-2t+2\ell}, \\
1\leq \ell \leq \min\{t,k\},\ \forall \ 1\leq j\leq  n-2t+2\ell,\ i_j\ \textit{and}\ j\ \textit{have the same parity}\}.
\end{multline*}
\end{itemize}
\end{thm}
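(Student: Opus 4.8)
The plan is to translate everything into a combinatorial packing problem on $P_n$, dispose of the two boundary cases $n=2t-1$ and $n=2t$ directly, treat $n>2t$ by a block analysis of $S$ and of its complement, and read off persistence and the index of stability from the resulting closed formula. The reduction is the standard one for monomial ideals: for a monomial prime $\mathfrak p_S=\langle x_i:i\in S\rangle$, monomial localization (setting $x_j=1$ for $j\notin S$) gives $\mathfrak p_S\in\Ass(R/I^k)$ iff $\langle x_i:i\in S\rangle\in\Ass(R_S/I_S^k)$, where $R_S=K[x_i:i\in S]$ and $I_S$ is the image of $I$; equivalently $(I^k:w)=\mathfrak p_S$ for some monomial $w$. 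Passing to exponent vectors and using that $x^a\in I^k$ iff $a$ dominates $\sum_{s=1}^k\chi_{F_s}$ coordinatewise for some size-$t$ independent sets $F_1,\dots,F_k$ of $P_n$ (here $\chi_F$ is the characteristic vector of $F$), this says: $\mathfrak p_S\in\Ass(R/I^k)$ iff there is $a\in\mathbb{N}^n$ with (a) no size-$t$ independent sets $F_1,\dots,F_k$ satisfy $\sum_s\chi_{F_s}(i)\le a_i$ for all $i\in S$, and (b) for each $i\in S$ there are size-$t$ independent sets $F_1,\dots,F_k$ with $\sum_s\chi_{F_s}(j)\le a_j+\delta_{ij}$ for all $j$. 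Since the entries of $a$ on $\overline S=\{1,\dots,n\}\setminus S$ are irrelevant to (a) and only help in (b), one takes them as large as needed, so the problem is governed by $a|_S$ together with the following structural fact: an independent set $G\subseteq S$ of $P_n$ is the trace $F\cap S$ of a size-$t$ independent set $F$ of $P_n$ exactly when $|G|\le t$ and $t-|G|\le M(G)$, where $M(G)$ is the largest independent set of $P_n$ inside $\overline S$ avoiding the neighbours of $G$; for the maximal blocks $B_1,\dots,B_r$ of consecutive integers in $\overline S$, of lengths $b_1,\dots,b_r$, one has $M(\emptyset)=\sum_j\lceil b_j/2\rceil$, and $M(G)$ drops below this only when $G$ blocks both ends of some $B_j$.

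\textbf{Boundary cases.} For $n=2t-1$ the unique size-$t$ independent set of $P_n$ is $\{1,3,\dots,2t-1\}$, so $I$ is principal and $\Ass(R/I^k)=\Min(R/I)=\{\langle x_1\rangle,\dots,\langle x_{2t-1}\rangle\}$ for all $k$. For $n=2t$ there are $t+1$ generators; a short transversal argument shows $\Min(R/I)=\{\langle x_{i_1},x_{i_2}\rangle:i_1<i_2,\ i_1\text{ odd},\ i_2\text{ even}\}$, and the criterion above shows that for $|S|\ge 3$ the complement blocks of $S$ are too short to push the localized colon ideal strictly below $\mathfrak p_S$; hence $I$ is normally torsion-free and $\Ass(R/I^k)=\Min(R/I)$ for all $k$ (I expect this normal torsion-freeness to need its own short lemma, e.g. via the packing/max-flow--min-cut property of the clutter of generators).

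\textbf{The case $n>2t$.} Here I would prove both inclusions of (iii). For ``$\subseteq$'': if $\mathfrak p_S\in\Ass(R/I^k)$, first show every block of $\overline S$ has even length --- an odd block carries an extra half-unit of slack which, at an optimal $a$, already forces one of its variables into $(I^k:x^a)$, contradicting $(I^k:x^a)=\mathfrak p_S$; this makes $|\overline S|$ even, so $|S|=n-2t+2\ell$ for an integer $\ell$, and the condition ``$i_j$ and $j$ have the same parity for every $j$'' is precisely the evenness of all blocks. With all blocks even, $M(\emptyset)=|\overline S|/2=t-\ell$, so every trace $G$ extending to a size-$t$ independent set has $|G|\ge\ell$; this forces $\ell\ge 1$ (i.e. $|S|\ge n-2t+2$: if $\ell\le 0$ then $G=\emptyset$ already extends and one checks $\mathfrak p_S$ is not associated), $\ell\le t$ (since $|S|\le n$), and $\ell\le k$ (by balancing the lower bound $\sum_{i\in S}a_i\ge k\ell-1$ forced by (b) against the cap on pairwise sums $a_i+a_{i+1}$ needed for each $a+e_i$ to decompose into $k$ size-$\ell$ traces). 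For ``$\supseteq$'': given $S$ with all complement blocks even and $1\le\ell\le\min\{t,k\}$, build $a|_S$ as an explicit alternating pattern fitting the parities of the segments of $S$, with $\sum_{i\in S}a_i=k\ell-1$, arranged so that for each $i\in S$ the vector $(a+e_i)|_S$ is a sum of $k$ characteristic vectors of size-$\ell$ extendable independent sets of $P_n[S]$ (using the integrality/decomposition properties of the independent-set polytope of a path); this yields (b), while (a) holds since any packing uses total $S$-weight $\ge k\ell>k\ell-1$; then enlarge $a$ on $\overline S$ so that (b) becomes literal.

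\textbf{Persistence, stability, and the main obstacle.} Once (i)--(iii) are proved, persistence is immediate: in (i) and (ii) the set $\Ass(R/I^k)$ is independent of $k$, and in (iii) the only $k$-dependence is through $\ell\le\min\{t,k\}$, which is nondecreasing in $k$, so $\Ass(R/I^k)\subseteq\Ass(R/I^{k+1})$ for all $k$. The same formula shows $\Ass(R/I^k)$ is constant for $k\ge t$ and strictly larger at $k=t$ (the maximal ideal, $\ell=t$, enters exactly there), so the index of stability is $t$ when $n>2t$ and $1$ otherwise, and the stable set is the $k=t$ (resp. $k=1$) instance of the formula. The main obstacle is the sharp combinatorial optimization in the case $n>2t$: constructing the extremal exponent vectors $a$ whose shape encodes simultaneously the parity condition and the cutoff on $\ell$, proving their extremality by an exchange argument on size-$t$ independent sets of $P_n$, and dually showing that a single odd complement block always suffices to repair the colon ideal --- I expect essentially all of the real work to sit in these two dual computations.
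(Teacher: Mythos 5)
Your outline follows essentially the same route as the paper's proof: reduce to exhibiting a monomial witness $u=\mathbf{x}^a$ with $I^k:u=\mathfrak{p}_S$, show every connected component of the complement of $S$ in $P_n$ has even size, bound $|\overline{S}|$ by $2t-2$, and for the reverse inclusion construct an extremal $a$ whose $S$-weight is $k\ell-1$ while any product of $k$ generators must carry $S$-weight at least $k\ell$. That degree count is exactly the paper's mechanism in Theorem \ref{inclusion1}. The difficulty is that the proposal stops at the level of a plan precisely where the real content lies, and the deferred steps are not routine.

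Concretely: (1) The hard half of the reverse inclusion is showing that for \emph{every} $i\in S$ the vector $a+e_i$ dominates a sum of $k$ characteristic vectors of size-$t$ independent sets. You assert that an ``explicit alternating pattern'' with this property exists ``using the integrality/decomposition properties of the independent-set polytope of a path,'' but no off-the-shelf decomposition theorem does this: the $k$ sets must simultaneously have trace of size exactly $\ell$ on $S$, be extendable into $\overline{S}$ past the blocked neighbours, and fit under $a+e_i$ for each of the $n-2t+2\ell$ choices of $i$. The paper has to choose a specific witness, $u=(\mathbf{x}^Au_1)^{k-\ell+1}(\mathbf{x}^Au_3)\cdots(\mathbf{x}^Au_{2\ell-3})(\mathbf{x}^Aw)$ with $u_{2j-1}=(x_{i_1}x_{i_3}\cdots x_{i_{2\ell+1}})/x_{i_{2j-1}}$ and $w=x_{i_1}\cdots x_{i_{2\ell-3}}$, and verify $\mathfrak{p}_S\subseteq I^k:u$ by a four-case re-packing argument; nothing in your sketch substitutes for that construction. (2) For $n=2t$ you propose to derive normal torsion-freeness from the max-flow--min-cut property of the clutter of generators, but Mengerian-ness of that clutter is \emph{equivalent} to normal torsion-freeness, so invoking it without an independent verification is circular; the paper instead proves the explicit primary decomposition $I^k=\bigcap_{r}\bigcap_{(i,j)}\langle x_i^r,x_j^{k+1-r}\rangle$ (Theorem \ref{2t}), which is a genuine computation. (3) Your one-line justifications of the odd-block exclusion (``an extra half-unit of slack \dots forces one of its variables into the colon ideal'') and of the bound $\ell\le k$ (``balancing \dots against the cap on pairwise sums'') are too vague to check; the former occupies all of Lemma \ref{lem2} in the paper, via an exchange replacing a generator $u_1$ by $x_{j+1}x_{j+3}\cdots x_{j+2m+1}\,u_1/\gcd(u_1,\prod_{i=j}^{j+2m+1}x_i)$ and a careful case analysis. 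So the architecture is correct and parallels the paper, but as a proof it has genuine gaps at every one of its load-bearing points.
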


Theorem \ref{main} implies that the ideal $I$ has the persistence property. Moreover,

$$\astab(I)=\left\{
  \begin{array}{ll}
    1 & \hbox{if}\ n=2t-1 \ \textrm{or}\ 2t \\
    t & \hbox{if}\ n>2t.
  \end{array}
\right.$$

and the stable set of associated prime ideals of $I$ is determined (see Corollary \ref{cor1}).



\section{A family of monomial ideals with the persistence property associated to path graphs}

In this section we study the set of associated primes of all powers of a family of squarefree monomial ideals associated to a path graph $P_n$ which will be defined in Definition \ref{def1}. First we give some terminology  and concepts which are needed in the sequel.

Throughout this paper $R=K[x_1,\ldots,x_n]$ is a polynomial ring over a field $K$ and $P_n:x_1,\ldots,x_n$ denotes a path graph with the vertex set $\{x_1,\ldots,x_n\}$ and the edge set $\{\{x_1,x_2\},\{x_2,x_3\},\ldots,\{x_{n-1},x_n\}\}$.

For any subset $F=\{x_{i_1}, \dots, x_{i_r}\}\subseteq\{x_1,\ldots,x_n\}$ we denote the monomial $x_{i_1}\cdots x_{i_r}$ in $R$ by $\textbf{x}^F $. The set of all monomials of $R$ is denoted by $\T{Mon}(R)$. For $u\in \T{Mon}(R)$, we set  $\Supp(u)=\{x_i:\ x_i|u\}$. Also if $u=x_1^{\alpha_1}\cdots x_n^{\alpha_n}$, we set $\deg_u(x_i)=\alpha_i$.
The unique minimal set of monomial generators of a monomial ideal $I$  is denoted by  $\mathcal{G}(I)$. Also the set of associated prime ideals of $R/I$ is denoted by $\T{Ass}(I)$.
For every two monomials $u$ and $v$,  the greatest common divisor of $u$ and $v$ is denoted by $\gcd(u,v)$. Also for our convenience, for every two monomials $u$ and $v$ we denote $\frac{u}{\T{gcd}(u,v)}$ by $u:v$.

For a simple graph $G$, a subset $A\subseteq V(G)$ is called an  \textit{independent set} of $G$ if no edge of $G$ is contained in $A$. For any subset $W\subseteq V(G)$, by $G[W]$ we mean the induced subgraph of $G$ on the
vertex set $W$, that is a subgraph of $G$ with the vertex set $W$ and its edge set consists of those edges of $G$ which are contained in $W$. The set $\{1,\ldots,n\}$ is denoted by $[n]$.

\begin{defn}
{\rm
An ideal $I$ in $R$ is said to satisfy the persistence property if $ \T{Ass}(I)\subseteq \T{Ass}(I^2)\subseteq \cdots\subseteq \T{Ass}(I^k)\subseteq\cdots$.  Also $I$ is called normally-torsion free if $ \T{Ass}(I^k)=\T{Ass}(I)$ for any integer $ k\geq 1$.
} \end{defn}
 For an ideal $I$ in $R$ the index of stability of $ I $ is denoted by $\astab(I)$ and the stable set of associated prime ideals of $I$ is denoted by $\T{Ass}^{\infty}(I)$.

\begin{defn}\label{def1}
{\rm
Let $n$ and $t$ be positive integers. The ideal $\Ind_t(P_n)$ in $K[x_1,\ldots,x_n]$ is defined as
$$\Ind_t(P_n)=\langle\textbf{x} ^F : \ F \ \textit{is an independent set of $P_n$ of size t}\rangle. $$
}
\end{defn}
Note that $\Ind_t(P_n)$ is indeed the facet ideal of the $t$-th skeleton of the independence complex of $P_n$ or the $t$-clique ideal of the complement of $P_n$ (see \cite{Mo}).

Through this section we assume that $I=\Ind_t(P_n)$ for some given positive integers $n$ and $t$.

Note that if $t=1$, then $I=\langle x_1, \dots, x_n\rangle$  and then $ \T{Ass}(I^k)=\T{Ass}(I)=\{\langle x_1, \dots, x_n\rangle\}$ for any integer $ k\geq 1$. Also, one can easily check that $I\neq 0$ if and only if $n\geq 2t-1$. Therefore in the sequel we may assume that $t>1$ and  $n\geq 2t-1$.

\begin{rems}\label{rem1}
{\rm
\begin{itemize}
\item[(i)] Let $t$ be a positive integer. Then any maximum independent set of $P_{2t}$ has $t$ elements.

\item[(ii)] Let $n=2t$. Then one can easily check that any minimal generator of $I$ is of the form $uv$, where $u=x_1x_3\cdots x_{2i-1}$ and $v=x_{2i+2}x_{2i+4}\cdots x_{2t}$ and $0\leq i\leq t$ is an integer. Indeed
$$I=\langle x_1x_3\cdots x_{2t-1}, x_1x_3\cdots x_{2t-3}x_{2t},x_1x_3\cdots x_{2t-5}x_{2t-2}x_{2t}, \ldots , x_1x_4x_6\cdots x_{2t}, x_2x_4\cdots x_{2t} \rangle.$$
\end{itemize}
Hereafter, we set $g_i=x_1x_3\cdots x_{2i-1}x_{2i+2}x_{2i+4}\cdots x_{2t}$ for any $0\leq i\leq t$.
}\end{rems}

\begin{lem}\label{lem1}
Let $n,t>1$ be integers with $n\geq 2t$, $1\leq \ell \leq t$, $0=i_0<i_1<i_2<\cdots<i_{n-2t+2\ell}<i_{n-2t+2\ell+1}=n+1$ and for any $\ 1\leq j\leq  n-2t+2\ell$, $i_j$ and $j$ have the same parity.  Set $B=\{x_{i_1},x_{i_2},\ldots,x_{i_{n-2t+2\ell}}\}$ and $A=\{x_1,\ldots,x_n\}\setminus B$. Then
\begin{itemize}
\item[(i)] $|A|=2t-2\ell$.

\item[(ii)] For any independent set $S$ of $P_n$ of size $t$, $|S\cap B|\geq \ell$.

\item[(iii)] Let  $H\subseteq B$ be a set of size $\ell$ such that for any $x_{i_j}\in H$, we have $x_{i_{j-1}}\notin H$ and $x_{i_{j+1}}\notin H$. Then $\textbf{{\rm x}}^{H}\textbf{{\rm x}}^{A}\in I$.
\end{itemize}
\end{lem}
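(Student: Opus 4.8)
The plan is to run all three parts off one structural fact about $A$. Write $m=n-2t+2\ell$. Since successive vertices of $B$ have indices of opposite parity and, with the conventions $i_0=0,\ i_{m+1}=n+1$ together with $n\equiv m\pmod 2$, the two end-gaps behave the same way, every maximal block of consecutive vertices of $A$ lying between two successive vertices of $B$ (or beyond them) has \emph{even} cardinality. Thus $P_n[A]$ is a disjoint union of paths $R_1,\dots,R_{m+1}$ with $|R_j|=2s_j$. Part (i) falls out at once, because $|B|=m$ forces $|A|=n-m=2t-2\ell$ and hence $\sum_j s_j=t-\ell$. For part (ii) I would observe that for any independent set $S$ of $P_n$ with $|S|=t$, the set $S\cap A$ is independent in $P_n[A]=\bigsqcup_j R_j$, and a path on $2s$ vertices has independence number $s$ (Remarks~\ref{rem1}(i)); therefore $|S\cap A|\le\sum_j s_j=t-\ell$, whence $|S\cap B|=t-|S\cap A|\ge\ell$.

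The substance is part (iii). I would first reduce it to a graph-theoretic statement: it suffices to produce an independent set $F$ of $P_n$ with $F\subseteq H\cup A$ and $|F|=t$, for then $\textbf{{\rm x}}^F$ is one of the defining generators of $I$ and divides $\textbf{{\rm x}}^H\textbf{{\rm x}}^A=\textbf{{\rm x}}^{H\cup A}$. Put $J=\{\,j:\ x_{i_j}\in H\,\}$; the hypothesis on $H$ says precisely that $J\subseteq\{1,\dots,m\}$ contains no two consecutive integers. The key is to describe the connected components of $P_n[H\cup A]$ explicitly. The only neighbours of $x_{i_j}$ in $P_n$ outside $R_j\cup R_{j+1}$ are $x_{i_{j-1}}$ and $x_{i_{j+1}}$, and for $j\in J$ both of these lie in $B\setminus H$ (since $j\pm 1\notin J$); so for each $j\in J$ the vertex $x_{i_j}$ sits in a component equal to $R_j\cup\{x_{i_j}\}\cup R_{j+1}$, a path on the \emph{odd} number $2s_j+2s_{j+1}+1$ of vertices. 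Sparseness of $J$ makes these $\ell$ components pairwise disjoint, and the remaining components are exactly the runs $R_j$ with $j\notin J$ and $j-1\notin J$, each a path on $2s_j$ vertices.

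Finally I would compute the independence number of $P_n[H\cup A]$ by summing over components: each odd component contributes $s_j+s_{j+1}+1$, each even one contributes $s_j$. Since the pairs $\{j,j+1\}$ for $j\in J$ together with the indices $j$ satisfying $j,j-1\notin J$ partition $\{1,\dots,m+1\}$, the $s$-terms add up to $\sum_{j=1}^{m+1}s_j=t-\ell$ while the $+1$'s add up to $|J|=\ell$; hence the independence number is exactly $t$, which yields the required $F$ and completes (iii). (Only $\ge t$ is needed, but it lands exactly on $t$.)

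I expect the one delicate point to be the component analysis in (iii): one must check that the no-two-consecutive condition on $J$ really prevents a single run $R_j$ from being glued to two different vertices of $H$, so that each vertex of $H$ creates exactly one new odd component and the tally comes out to $t$ on the nose rather than under- or overshooting. A harmless wrinkle to keep in mind is that some $s_j$ may vanish — two vertices of $B$ can be adjacent in $P_n$, or a vertex of $B$ can be an endpoint of $P_n$ — but an empty run is simply absent, and then the odd component of $x_{i_j}$ may collapse to the single vertex $\{x_{i_j}\}$, still of odd size, so the count is unaffected.
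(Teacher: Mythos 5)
Your proposal is correct and follows essentially the same route as the paper: parts (i) and (ii) are identical, resting on the observation that the parity hypothesis forces every connected component of $P_n[A]$ to be a path with an even number of vertices. For (iii) the paper explicitly builds a size-$t$ independent set inside $H\cup A$ by shifting each even block to its odd- or even-indexed half according to whether its left $B$-neighbour lies in $H$, whereas you compute the independence number of $P_n[H\cup A]$ componentwise; this is the same decomposition and the same count $\ell+(t-\ell)=t$, merely packaged as a counting argument rather than an explicit construction.
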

\begin{proof}
$(\textbf{i})$ It is obvious.

$(\textbf{ii})$ Let $P_n[A]=L_{s_1}\cup\cdots\cup L_{s_r}$, where $L_{s_1},\ldots,L_{s_r}$ are connected components of $P_n[A]$ with $s_1,\dots, s_r$ vertices respectively. Note that $L_{s_m}$ is a path graph for any $m$. We have $\sum_{m=1}^r s_m=2t-2\ell$. For any $1\leq m\leq r-1$, we have $s_m=i_{j+1}-i_j-1$ for some $0\leq j\leq n-1$. Since $i_j$ and $j$ have the same parity, $i_j-j$ is even. Similarly $i_{j+1}-(j+1)$ is even. So $i_{j+1}-i_j=(i_{j+1}-(j+1))-(i_j-j)+1$ is odd and then $s_m=i_{j+1}-i_j-1$ is even for all $1\leq m\leq r-1$. Also $s_r=\sum_{m=1}^r s_m-\sum_{m=1}^{r-1} s_m=2t-2\ell-\sum_{m=1}^{r-1} s_m$ is even. For any $1\leq m\leq r$, let $s_m=2a_m$. Let $S$ be an independent set of $P_n$ of size $t$. By Remarks \ref{rem1}(i), $|S\cap V(L_{s_m})|\leq a_m$, for any
$1\leq m\leq r$. Thus $|S\cap A|\leq \sum_{m=1}^r a_m=\frac{1}{2}\sum_{m=1}^r s_m=t-\ell$. Since $|S|=t$, we have $|S\cap B|\geq \ell$ and this completes the proof.

$(\textbf{iii})$ Clearly $H$ is an independent set of $P_n$. Let $1\leq m\leq r$. For any connected component $L_{s_m}:x_{\lambda_m+1},\ldots,x_{\lambda_m+2a_m}$ of $P_n[A]$, if $\lambda_m+1 \neq 1$, then we have $x_{\lambda_m}=x_{i_{j_m}}$ for some $x_{i_{j_m}}\in B$ and if $\lambda_m+2a_m\neq n$, then we have $x_{\lambda_m+2a_m+1}=x_{i_{j_m+1}}\in B$. (Note that if $\lambda_m+1=1$, we should have $m=1$ and $\lambda_1=0$ and if $\lambda_m+2a_m=n$, we should have $m=r$ and $\lambda_r=n-2a_m$.) By assumption at least one of  $x_{i_{j_m}}$ and $x_{i_{j_m+1}}$ does not belong to $H$  for any $m$. For each $1\leq m\leq r$ we define $F_m$ as follows:\\
$\bullet$ If $x_{i_{j_m}}\notin H$, then set $F_m=\{x_{\lambda_m+1},x_{\lambda_m+3},\ldots,x_{\lambda_m+2a_m-1}\}$.\\
$\bullet$ If $x_{i_{j_m}}\in H$, then set $F_m=\{x_{\lambda_m+2},x_{\lambda_m+4},\ldots,x_{\lambda_m+2a_m}\}$ (note that $x_{i_{j_m+1}}\notin H$).\\

Clearly $F_m$  is an independent set of $P_n$ of size $a_m$ for any $1\leq m\leq r$ and  $H\cup F_1\cup\cdots \cup F_r$ is an independent set of $P_n$ of size $|H|+\sum_{m=1}^r a_m=\ell+(t-\ell)=t$. So $\textbf{x}^{H\cup F_1\cup\cdots \cup F_r}\in I$. Also since $H\cup F_1\cup\cdots \cup F_r\subseteq H\cup A$, we have $\textbf{x}^{H}\textbf{x}^{A}=\textbf{x}^{H\cup A}\in I$ and the claim is proved (note that $H\cap A=\emptyset$, since $H\subseteq B$).
\end{proof}

\begin{thm}\label{inclusion1}
Let $n, t>1$ and $k$ be positive integers.
\begin{itemize}
  \item[(i)]  If $n=2t$, then $$\{\langle x_{i_1},x_{i_2}\rangle:\ i_1<i_2,\ i_1\ \textit{ is odd and}\ i_2\ \textit{is even}\}\subseteq \T{Ass}(I^k).$$
  \item[(ii)] If $n>2t$, then
  \begin{multline*}
    \{\langle x_{i_1},x_{i_2},\ldots,x_{i_{n-2t+2\ell}}\rangle:\ i_1<i_2<\cdots<i_{n-2t+2\ell}, 1\leq \ell \leq \min\{t,k\}, \\
    \forall\ 1\leq j\leq  n-2t+2\ell,\ i_j\ \textit{and}\ j\ \textit{have the same parity}\}\subseteq \T{Ass}(I^k).
  \end{multline*}
\end{itemize}
\end{thm}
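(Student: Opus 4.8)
The plan is to establish each claimed prime ideal as an associated prime of $I^k$ by exhibiting an explicit witness monomial, i.e. for a candidate prime $\mathfrak{p}=\langle x_{i_1},\ldots,x_{i_m}\rangle$ (with $m=n-2t+2\ell$, or $m=2$ in case (i), which is the $\ell=1$ instance of (ii) when $n=2t$) I will produce a monomial $w$ with $(I^k:w)=\mathfrak{p}$. The natural candidate is built from the complementary variable set: with $B=\{x_{i_1},\ldots,x_{i_m}\}$ and $A=\{x_1,\ldots,x_n\}\setminus B$ as in Lemma~\ref{lem1}, I will take $w=(\textbf{x}^A)^{c}\cdot(\textbf{x}^{?})$ for a suitable power $c$ chosen so that $w\notin I^k$ but $x_{i_j}w\in I^k$ for every $j$. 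The parity hypothesis on the $i_j$ is exactly what makes $A$ decompose into even-length path segments (this is proved inside Lemma~\ref{lem1}(ii)), so $\textbf{x}^A$ is a product of $t-\ell$ "local" generators living on those segments, while Lemma~\ref{lem1}(iii) guarantees that adjoining any admissible $\ell$-subset $H\subseteq B$ to $A$ lands inside $I$.

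The key steps, in order: (1) Fix $\ell$ with $1\le\ell\le\min\{t,k\}$ and set $w=(\textbf{x}^A)^{k-\ell}\cdot\textbf{x}^{A}$ adjusted appropriately — more precisely I expect the right witness to be $w=(\textbf{x}^{A})^{\,k-\ell+1}$ times a product over the path segments of $A$ raised to make each segment contribute enough, together with one factor of $\textbf{x}^A$ per "missing" generator; I will pin down the exponents so that $\deg_w(x_i)$ counts how many of the $k$ generators in a product can use $x_i$. (2) Show $x_{i_j}w\in I^k$: writing $w$ as a product of $k-\ell$ honest generators supported in $A$ plus a deficiency, multiplication by $x_{i_j}$ lets me form $\ell$ further generators of the form $\textbf{x}^{H\cup(\text{part of }A)}$ using Lemma~\ref{lem1}(iii) with $x_{i_j}\in H$, so altogether $x_{i_j}w$ is divisible by a product of $k$ generators of $I$, hence lies in $I^k$. (3) Show $w\notin I^k$: any expression of a multiple of $w$ as a product of $k$ generators of $I$ forces, by Lemma~\ref{lem1}(ii), each generator to meet $B$ in at least $\ell$ vertices, so the total $B$-degree needed is at least $k\ell$; choosing the $B$-exponents of $w$ to be strictly less than $k\ell$ in aggregate (e.g. giving each $x_{i_j}$ multiplicity $\le k-1$, or controlling the sum) yields the contradiction. (4) Conclude $(I^k:w)=\mathfrak{p}$: step (2) gives $\supseteq$, and for $\subseteq$ note that $(I^k:w)$ is a monomial ideal whose radical is contained in $\mathfrak{p}$ by a support argument on $A$ (no pure power of an $x_i\in A$ multiplies $w$ into $I^k$, again via the even-segment structure), so $\T{Ass}$ membership follows since $\mathfrak{p}$ is prime. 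The cases $n=2t-1$ and $n=2t$ of the eventual theorem are handled by the same machinery with $\ell$ forced to a single value; here we only need the inclusions, so (i) is literally the $n=2t$, $\ell=1$ slice of the argument.

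The main obstacle I anticipate is step (3), ensuring $w\notin I^k$ while simultaneously keeping $x_{i_j}w\in I^k$ for \emph{every} $j$: the witness has to be "just barely" outside $I^k$, and the bookkeeping of exponents on the $A$-part (so that the $k-\ell$ segment-generators can genuinely be peeled off) against the $B$-part (so that the $k\ell$ lower bound from Lemma~\ref{lem1}(ii) is violated by exactly one) is delicate. A secondary subtlety is verifying the colon ideal is not larger than $\mathfrak{p}$ — i.e. that $w$ is not already pushed into $I^k$ by multiplying by a power of some $x_i\in A$ — which I expect to reduce to the observation that each connected component of $P_n[A]$ is an even path on which the maximum independent set size is rigid (Remarks~\ref{rem1}(i)), so there is no "slack" to absorb an extra $A$-variable.
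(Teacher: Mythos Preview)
Your high-level strategy agrees with the paper's: exhibit a witness monomial $u$ with $I^k:u=P$, and bound the colon from above via the $B$-degree count coming from Lemma~\ref{lem1}(ii) (each of the $k$ generators meets $B$ in at least $\ell$ points, so a product of $k$ generators has $B$-degree $\ge k\ell$, and one arranges $u$ to have $B$-degree exactly $k\ell-1$). That reverse-inclusion argument is correct and is exactly what the paper does.

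The gap is in the witness itself. You propose $w$ essentially as a power of $\textbf{x}^A$, and in step~(2) you speak of ``$k-\ell$ honest generators supported in $A$''. But there are no such generators: every generator of $I$ corresponds to an independent set of size $t$, while the maximum independent set in $P_n[A]$ has size only $t-\ell$ (this is what Lemma~\ref{lem1}(i)--(ii) says). Hence a monomial supported on $A$ alone is infinitely far from $I^k$, and multiplying by a single $x_{i_m}$ cannot produce even one generator of $I$ when $\ell\ge 2$, let alone $k$ of them. The witness must already carry $B$-degree $k\ell-1$, and the real difficulty---which your outline does not address---is choosing \emph{which} $B$-variables, with which multiplicities, so that adding any one $x_{i_m}$ permits a rearrangement into $k$ genuine generators. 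The paper takes
\[
u=(\textbf{x}^Au_1)^{k-\ell+1}(\textbf{x}^Au_3)(\textbf{x}^Au_5)\cdots(\textbf{x}^Au_{2\ell-3})(\textbf{x}^Aw),
\]
with $u_{2j-1}=(x_{i_1}x_{i_3}\cdots x_{i_{2\ell+1}})/x_{i_{2j-1}}$ and $w=x_{i_1}x_{i_3}\cdots x_{i_{2\ell-3}}$; each bracketed factor except the last lies in $I$ by Lemma~\ref{lem1}(iii), and the total $B$-degree is $(k-\ell+1)\ell+(\ell-2)\ell+(\ell-1)=k\ell-1$. Verifying $x_{i_m}u\in I^k$ for every $m$ then requires four separate case analyses (according to the position and parity of $m$ relative to $1,\ldots,2\ell-2$), each involving a nontrivial swap of $B$-variables among the factors. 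This is the substantive content of the proof, and ``adjust the exponents'' does not capture it.
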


\begin{proof}

$(\textbf{i})$ Let $P=\langle x_{i_1},x_{i_2}\rangle$. Since $P$ is a prime ideal, in order to prove that $P\in \T{Ass}(I^k)$ it is enough to show that there exists $u\in \T{Mon}(R)$ such that $P=I^k:u$. Following the notations of Lemma \ref{lem1}, set $B=\{x_{i_1},x_{i_2}\}$ and $A=\{x_1,\ldots,x_n\}\setminus B$. Since $x_1x_3\cdots x_{2t-1} | x_{i_1}\textbf{x}^{A}$ and $x_2x_4\cdots x_{2t} | x_{i_2}\textbf{x}^{A}$, we have $u_1:=x_{i_1}\textbf{x}^{A}\in I$ and $u_2:=x_{i_2}\textbf{x}^{A}\in I$. Hence by setting $u=u_1^{k-1} \textbf{x}^A$ we have $x_{i_1}u=u_1^k\in I^k$ and $x_{i_2}u=u_1^{k-1}u_2\in I^k$. Therefore $x_{i_1},x_{i_2}\in I^k:u$ and so $P\subseteq I^k:u$.

Now, let $f\in I^k:u$ be a monomial (note that $I^k:u$ is a monomial ideal). Then $fu\in I^k$ and hence there exist $v_1,v_2,\ldots,v_k\in \mathcal{G}(I)$ such that $v_1v_2\cdots v_k| fu$. Use notations as in Lemma \ref{lem1} and set $B_i=\T{Supp}(v_i)\cap B$ for any $1\leq i\leq k$. By Lemma \ref{lem1}(ii), $|B_i|\geq 1$ for any $1\leq i\leq k$ (note that $\ell=1$ in this case). Clearly $\textbf{x}^{B_1}\textbf{x}^{B_2}\cdots \textbf{x}^{B_k}|v_1v_2\cdots v_k$. Thus $\textbf{x}^{B_1}\textbf{x}^{B_2}\cdots \textbf{x}^{B_k}| fu$. Since $B_i\subseteq B$ for all $1\leq i\leq k$, $A\cap B=\emptyset$ and $u=(x_{i_1}\textbf{x}^A)^{k-1}\textbf{x}^A$,  we have

\begin{equation}\label{eq}
\textbf{x}^{B_1}\textbf{x}^{B_2}\cdots \textbf{x}^{B_k}| fx_{i_1}^{k-1}.
\end{equation}

Since $|B_i|\geq 1$ for all $1\leq i\leq k$,  we have $\deg(\textbf{x}^{B_1}\textbf{x}^{B_2}\cdots \textbf{x}^{B_k})\geq k$. Thus considering relation (\ref{eq}), there exists $x_{i_j}\in B_m$ for some $1\leq m\leq k$ such that $x_{i_j}|f$. Note that $x_{i_j}\in B$ and hence $x_{i_j}\in P$. Thus $f\in P$ which completes the proof.

$(\textbf{ii})$ Let $n>2t$, $1\leq \ell \leq \min\{t,k\}$, $P=\langle x_{i_1},x_{i_2},\ldots,x_{i_{n-2t+2\ell}}\rangle$ such that $i_1<i_2<\cdots<i_{n-2t+2\ell}$ and $i_j$ and $j$ have the same parity for all  $j$. Then $n-2t+2\ell \geq 2\ell+1$. Set $B=\mathcal{G}(P)$ and $A=\{x_1,\ldots,x_n\}\setminus B$.
Since $P$ is a prime ideal, in order to prove that $P\in \T{Ass}(I^k)$, it is enough to show that there exists $u\in \T{Mon}(R)$ such that $P=I^k:u$.
For any $1\leq j\leq \ell-1$, set $u_{2j-1}=(x_{i_1}x_{i_3}x_{i_5}\cdots x_{i_{2\ell+1}})/x_{i_{2j-1}}$. Then $\T{Supp}(u_{2j-1})$ is a set of size $\ell$ which satisfies the assumption of Lemma \ref{lem1}(iii). Thus by Lemma \ref{lem1}(iii), $\textbf{x}^{A}u_{2j-1}\in I$ for any $1\leq j\leq \ell-1$.
Set $w=x_{i_1}x_{i_3}x_{i_5}\cdots x_{i_{2\ell-3}}$ and $$u=(\textbf{x}^Au_1)^{k-\ell+1}(\textbf{x}^Au_3)(\textbf{x}^Au_5)\cdots (\textbf{x}^Au_{2\ell-3})(\textbf{x}^Aw).$$

To show $I^k:u=P$, first let $f\in I^k:u$ be an arbitrary monomial. Then $fu\in I^k$. So there exist $v_1,v_2,\ldots,v_k\in \mathcal{G}(I)$ such that $v_1v_2\cdots v_k| fu$.  Set $B_i=\T{Supp}(v_i)\cap B$ for any $1\leq i\leq k$. By Lemma \ref{lem1}(ii), $|B_i|\geq \ell$ for any $1\leq i\leq k$. Clearly $\textbf{x}^{B_1}\textbf{x}^{B_2}\cdots \textbf{x}^{B_k}|v_1v_2\cdots v_k$. Thus $\textbf{x}^{B_1}\textbf{x}^{B_2}\cdots \textbf{x}^{B_k}| fu$.   Since $B_i\subseteq B$ for all $i$ and $A\cap B=\emptyset$, we have

\begin{equation}\label{eq1}
\textbf{x}^{B_1}\textbf{x}^{B_2}\cdots \textbf{x}^{B_k}| fu_1^{k-\ell+1}u_3u_5\cdots u_{2\ell-3}w.
\end{equation}

Since $|B_i|\geq \ell$ for all $i$, we have $\deg(\textbf{x}^{B_1}\textbf{x}^{B_2}\cdots \textbf{x}^{B_k})\geq k\ell$. Also
\begin{align*}
\deg(u_1^{k-\ell+1}u_3u_5\cdots u_{2\ell-3}w) &=(k-\ell+1)\deg(u_1)+\sum_{j=2}^{\ell-1} \deg(u_{2j-1})+\deg(w)\\
&=(k-\ell+1)\ell+(\ell-2)\ell+(\ell-1)\\
&=k\ell-1.
\end{align*}
Thus considering relation (\ref{eq1}), there exists $x_{i_j}\in B_m$ for some $1\leq m\leq k$ such that $x_{i_j}|f$. Note that $x_{i_j}\in B$ and hence $x_{i_j}\in P$. Thus $f\in P$ and so $I^k:u\subseteq P$.

Conversely, let $x_{i_m}\in P$ for some $1\leq m\leq n-2t+2\ell$. We should prove that $x_{i_m}u\in I^k$. We consider four cases:

\textbf{Case 1}. Let $m>2\ell -2$. Then $\{x_{i_m}\}\cup \T{Supp}(w)$ is a set of size $\ell$ satisfying the assumption of Lemma \ref{lem1}(iii). So by Lemma \ref{lem1}(iii), $\textbf{x}^{A} x_{i_m}w\in I$. Thus $$x_{i_m}u=(\textbf{x}^Au_1)^{k-\ell+1}(\textbf{x}^Au_3)(\textbf{x}^Au_5)\cdots (\textbf{x}^Au_{2\ell-3})(\textbf{x}^Ax_{i_m}w)\in I^k$$ (note that $\textbf{x}^{A}u_{2j-1}\in I$ for any $1\leq j\leq \ell-1$, as was mentioned above). Thus $x_{i_m}\in I^k:u$ in this case.

\textbf{Case 2}. Let $m=2\ell -2$. Then $x_{i_m}=x_{i_{2\ell-2}}$ and $\T{Supp}(w)\cup\{x_{i_{2\ell-2}},x_{i_{2\ell+1}}\}\setminus \{x_{i_{2\ell-3}}\}$ is a subset of $B$ of size $\ell$ satisfying the assumption of Lemma \ref{lem1}(iii). Thus $x^A(x_{i_{2\ell-2}}x_{i_{2\ell+1}}w/x_{i_{2\ell-3}})\in I$. Also $(\Supp(u_{2\ell-3})\cup \{x_{i_{2\ell-3}}\})\setminus \{x_{i_{2\ell+1}}\}$ is a subset of $B$ of size $\ell$ satisfying the assumption of Lemma \ref{lem1}(iii). Therefore  $\textbf{x}^A(x_{i_{2\ell-3}}u_{2\ell-3}/x_{i_{2\ell+1}})\in I$. Hence
\begin{multline*}
 x_{i_m}u=x_{i_{2\ell-2}}u=(\textbf{x}^Au_1)^{k-\ell+1}(\textbf{x}^Au_3)(\textbf{x}^Au_5)\cdots (\textbf{x}^Au_{2\ell-5})\\
 (\textbf{x}^Ax_{i_{2\ell-3}}u_{2\ell-3}/x_{i_{2\ell+1}})(\textbf{x}^Ax_{i_{2\ell-2}}x_{i_{2\ell+1}}w/x_{i_{2\ell-3}})\in I^k.
\end{multline*}
Thus $x_{i_m}\in I^k:u$.

\textbf{Case 3}. Let $m<2\ell -2$ and $m$ be an odd number. Then $m=2j-1$ for some $1\leq j\leq \ell-1$. By definition of $u_{2j-1}$, one can see that $x_{i_{2\ell+1}}|u_{2j-1}$. Also $(\Supp(u_{2j-1})\cup \{x_{i_{2j-1}}\})\setminus \{x_{i_{2\ell+1}}\}$ is a subset of $B$ of size $\ell$ satisfying the assumption of Lemma \ref{lem1}(iii). Thus by Lemma \ref{lem1}(iii), $\textbf{x}^A(x_{i_{2j-1}}u_{2j-1}/x_{i_{2\ell+1}})\in I$. Note that $\Supp(w)\cup\{x_{i_{2\ell+1}}\}$ is also a subset of $B$ of size $\ell$ satisfying the assumption of Lemma \ref{lem1}(iii) and then $\textbf{x}^Ax_{i_{2\ell+1}}w\in I$. Therefore
\begin{multline*}
  x_{i_m}u=x_{i_{2j-1}}u=(\textbf{x}^Au_1)^{k-\ell+1}(\textbf{x}^Au_3)(\textbf{x}^Au_5)\cdots (\textbf{x}^Au_{2j-3})\\
  (\textbf{x}^Ax_{i_{2j-1}}u_{2j-1}/x_{i_{2\ell+1}})(\textbf{x}^Au_{2j+1})\cdots (\textbf{x}^Au_{2\ell-3})(\textbf{x}^Ax_{i_{2\ell+1}}w)\in I^k.
\end{multline*}
Hence $x_{i_m}\in I^k:u$.\\
\textbf{Case 4}. Let $m<2\ell -2$ and $m$ be an even number. Then $m=2j\leq 2\ell-4$ for some $1\leq j\leq \ell-2$.
So $(\Supp(w)\cup\{x_{i_{2j}},x_{i_{2\ell-1}},x_{i_{2\ell+1}}\})\setminus\{x_{i_{2j-1}},x_{i_{2j+1}}\}$ is a subset of $B$ of size $\ell$ satisfying the assumption of Lemma \ref{lem1}(iii). Then by Lemma \ref{lem1}(iii), $$\textbf{x}^A(wx_{i_{2j}}x_{i_{2\ell-1}}x_{i_{2\ell+1}})/(x_{i_{2j-1}}x_{i_{2j+1}})\in I.$$
 Also $(\Supp(u_{2j-1})\cup \{x_{i_{2j-1}}\})\setminus \{x_{i_{2\ell-1}}\}$ and $(\Supp(u_{2j+1})\cup \{x_{i_{2j+1}}\})\setminus \{x_{i_{2\ell+1}}\}$ are subsets of $B$ of size $\ell$ satisfying the assumption of Lemma \ref{lem1}(iii). Thus $\textbf{x}^A(x_{i_{2j-1}}u_{2j-1}/x_{i_{2\ell-1}})\in I$ and $\textbf{x}^A(x_{i_{2j+1}}u_{2j+1}/x_{i_{2\ell+1}})\in I$.
Therefore
\begin{multline*}
 x_{i_m}u=x_{i_{2j}}u=(\textbf{x}^Au_1)^{k-\ell+1}(\textbf{x}^Au_3)(\textbf{x}^Au_5)\cdots (\textbf{x}^Au_{2j-3})(\textbf{x}^Ax_{i_{2j-1}}u_{2j-1}/x_{i_{2\ell-1}}) \\
(\textbf{x}^Ax_{i_{2j+1}}u_{2j+1}/x_{i_{2\ell+1}})(\textbf{x}^Au_{2j+3})\cdots (\textbf{x}^Au_{2\ell-3}) (\textbf{x}^Ax_{i_{2j}}x_{i_{2\ell-1}}x_{i_{2\ell+1}}w/x_{i_{2j-1}}x_{i_{2j+1}})\in I^k.
\end{multline*}
So $x_{i_m}\in I^k:u$.

Therefore $P\subseteq I^k:u$. So $I^k:u=P$ and $P\in \T{Ass} (I^k)$ as desired.
\end{proof}

Although in Theorem \ref{inclusion1}(i) we showed that when $n=2t$, $$\{\langle x_{i_1},x_{i_2}\rangle:\ i_1<i_2,\ i_1\ \textit{ is odd and}\ i_2\ \textit{is even}\}\subseteq \T{Ass}(I^k),$$
in the following theorem we show that indeed the equality holds. In fact we present a primary decomposition of $I^k$ for any positive integer $k$.

\begin{thm}\label{2t}
Suppose that $n, t>1$ and $k$ be positive integers such that $n=2t$. Then $$\T{Ass}(I^k)=\T{Ass}(I)=\{\langle x_{i_1},x_{i_2}\rangle:\ i_1<i_2,\ \textit{$i_1$ is odd and $i_2$ is even}\}.$$
\end{thm}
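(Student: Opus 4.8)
The plan is to show the two reverse inclusions needed to sandwich everything. By Theorem \ref{inclusion1}(i) we already know that every prime $\langle x_{i_1},x_{i_2}\rangle$ with $i_1<i_2$, $i_1$ odd and $i_2$ even lies in $\T{Ass}(I^k)$ for every $k\geq 1$; so it suffices to prove that $\T{Ass}(I^k)$ contains nothing else, and the sharpest way to do this is to exhibit an explicit irredundant primary decomposition of $I^k$ whose associated primes are exactly those $\langle x_{i_1},x_{i_2}\rangle$. First I would fix the notation $g_i=x_1x_3\cdots x_{2i-1}x_{2i+2}x_{2i+4}\cdots x_{2t}$ from Remarks \ref{rem1}, so that $\mathcal{G}(I)=\{g_0,g_1,\ldots,g_t\}$, and note that $\Supp(g_i)=\{x_1,x_3,\ldots,x_{2i-1}\}\cup\{x_{2i+2},x_{2i+4},\ldots,x_{2t}\}$ consists of the odd indices $\le 2i-1$ together with the even indices $\ge 2i+2$; in particular $x_{2j-1}\mid g_i \iff j\le i$ and $x_{2j}\mid g_i \iff j> i$.

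The key step is to identify $I^k:u$ for a monomial $u$ and show it must be contained in some $\langle x_{i_1},x_{i_2}\rangle$ of the asserted shape whenever it is prime. Given $u$ with $P:=I^k:u$ prime, $P$ is generated by a subset of $\{x_1,\ldots,x_n\}$, say $P=\langle x_j:\ j\in T\rangle$. I would argue that $T$ must contain at least one odd index and at least one even index, and that it cannot contain two indices of the same parity that would force a third variable into the colon ideal — concretely, using Lemma \ref{lem1}(ii) with $\ell=1$ (so $|S\cap B|\ge 1$ for every generating independent set $S$) applied to the sets $B$ with $|B|=n-2t+2=2$. The degree-counting mechanism is exactly the one already used in the proof of Theorem \ref{inclusion1}(i): if $v_1\cdots v_k\mid fu$ with $v_i\in\mathcal{G}(I)$, then each $v_i$ contributes at least one variable from any two-element "complement candidate" $B$, and comparing $\deg$ forces some such variable to divide $f$. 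Running this for every admissible $B=\{x_{i_1},x_{i_2}\}$ (one odd, one even, $i_1<i_2$) simultaneously shows $P$ is squeezed into the intersection of these candidates, hence equals one of them. The cleaner packaging, which I would prefer to write, is to directly verify
\[
I^k=\bigcap_{\substack{i_1<i_2\\ i_1\ \text{odd},\ i_2\ \text{even}}}\langle x_{i_1},x_{i_2}\rangle^{k},
\]
or more precisely that the right-hand side (a product/intersection of powers of these height-two primes, suitably truncated) is contained in $I^k$, the reverse containment being immediate since each $g_i$ lies in every $\langle x_{i_1},x_{i_2}\rangle$ with $i_1$ odd, $i_2$ even: indeed $g_i$ contains the odd variable $x_{i_1}$ when $i_1\le 2i-1$, i.e.\ $i\ge (i_1+1)/2$, and the even variable $x_{i_2}$ when $i_2\ge 2i+2$, i.e.\ $i\le (i_2-2)/2$, and for $i_1<i_2$ these two ranges cover all $i\in\{0,\ldots,t\}$.

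The main obstacle is the containment "$\bigcap \langle x_{i_1},x_{i_2}\rangle^k \subseteq I^k$" (equivalently, showing that the colon ideals $I^k:u$ never acquire an extra associated prime): one must take a monomial $w$ lying in every $\langle x_{i_1},x_{i_2}\rangle^k$ for all valid pairs and manufacture a factorization $w=v_1\cdots v_k\cdot(\text{monomial})$ with $v_i\in\mathcal{G}(I)$. The way I would handle this is inductively on $k$ and on $\deg w$, peeling off one generator $g_i$ at a time: from the exponent vector of $w$ one reads how many odd and even variables, and in which positions, are available; the hypothesis that $w\in\langle x_1,x_{2t}\rangle^k\cap\langle x_1,x_{2}\rangle^k\cap\cdots$ translates into enough slack in every "odd-prefix / even-suffix" split to choose an index $i$ with $x_1\cdots x_{2i-1}x_{2i+2}\cdots x_{2t}\mid w$, divide it out, and recurse. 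This is the only place where one genuinely uses that $n=2t$ (so that $P_n$ has a perfect-matching-free structure making the generators $g_0,\ldots,g_t$ "nested"), and it is where the bookkeeping is heaviest; everything else is the prime-avoidance/degree-count argument already rehearsed in Theorem \ref{inclusion1} together with Lemma \ref{lem1}.
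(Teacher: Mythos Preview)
Your overall strategy coincides with the paper's: both establish the primary decomposition
\[
I^k \;=\; \bigcap_{\substack{i_1<i_2\\ i_1\ \text{odd},\ i_2\ \text{even}}} \langle x_{i_1},x_{i_2}\rangle^{k},
\]
which the paper writes equivalently as $\bigcap_{r=1}^{k}\bigcap_{(i,j)}\langle x_i^r,x_j^{k+1-r}\rangle$, and the containment $I^k\subseteq\bigcap$ is handled identically. (Your earlier remarks about ``squeezing $P$ into the intersection of these candidates, hence equaling one of them'' do not stand on their own --- containment in an intersection of primes does not force equality with one of them --- but you rightly drop that line in favour of the decomposition.)

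The genuine gap is in the reverse containment $\bigcap\subseteq I^k$. Your inductive ``peel off one $g_i$'' plan requires a \emph{specific} choice of $i$, and an arbitrary one with $g_i\mid w$ does not work: take $t=2$, $k=2$, $w=x_1x_2x_3x_4$; then $w$ lies in every $\langle x_{i_1},x_{i_2}\rangle^2$, but peeling $g_1=x_1x_4$ leaves $x_2x_3\notin\langle x_1,x_4\rangle$, so the recursion breaks. A choice that does work is the largest $i$ with $x_1,x_3,\ldots,x_{2i-1}\mid w$ (so either $i=t$, or $x_{2i+1}\nmid w$); in the latter case membership of $w$ in $\langle x_{2i+1},x_{2q}\rangle^k$ forces $\deg_{x_{2q}}(w)\ge k$ for every $q>i$, which supplies exactly the extra unit of slack needed when the peel subtracts from both an odd exponent ($p\le i$) and an even one ($q>i$). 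You should make this choice explicit and carry out the check. The paper sidesteps the induction entirely: for each odd $i$ it picks $m_i$ with $x_i^{m_i}(x_{i+1}x_{i+3}\cdots x_{2t})^{k-m_i}\mid w$, replaces it by the running minimum $m'_i=\min\{m_1,m_3,\ldots,m_i\}$, and verifies in one stroke that $g_0^{\,k-m'_1}g_1^{\,m'_1-m'_3}\cdots g_{t-1}^{\,m'_{2t-3}-m'_{2t-1}}g_t^{\,m'_{2t-1}}$ divides $w$ and lies in $I^k$.
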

\begin{proof}
Set $$J=\bigcap_{r=1}^k \bigcap_{(i,j)}\langle \{x_i^r,x_j^{k+1-r}:\ i<j,\  \textit{$i$ is odd and $j$ is even}\}\rangle.$$ We claim that $I^k=J$.
Let $i$ be an odd number and $j$ be an even number such that $i<j$ and $1\leq r\leq k$ be an integer. We show that $I^k\subseteq \langle x_i^r,x_j^{k+1-r}\rangle$. Let $f\in I^k$ be an arbitrary monomial. Then there exists $u_1,\ldots,u_k\in \mathcal{G}(I)$ such that $u_1\cdots u_k|f$. If $x_i^r|u_1\cdots u_k$, then $x_i^r|f$ and hence $f\in \langle x_i^r,x_j^{k+1-r}\rangle$. If $x_i^r\nmid u_1\cdots u_k$, then without loss of generality we may assume that $x_i\nmid u_1,x_i\nmid u_2,\ldots,x_i\nmid u_{k-r+1}$. So by Remarks \ref{rem1}(ii), $x_j|u_1,x_j|u_2,\ldots,x_j| u_{k-r+1}$, since $i<j$ and $j$ is even. Thus $x_j^{k+1-r}|u_1\cdots u_k$ and hence $x_j^{k+1-r}|f$. So $f\in \langle x_i^r,x_j^{k+1-r}\rangle$. Thus $I^k\subseteq \langle x_i^r,x_j^{k+1-r}\rangle$. Since $i, j$ and $r$ were arbitrary, we have $I^k\subseteq J$. Now, we show that $J\subseteq I^k$.
We have

$$\begin{array}{rl}
  J= & \bigcap_{r=1}^k \bigcap_{odd\ i}  (\langle x_{i}^r,x_{i+1}^{k+1-r}\rangle
\cap \langle x_{i}^r,x_{i+3}^{k+1-r}\rangle\cap \cdots\cap\langle x_{i}^r,x_{2t}^{k+1-r}\rangle) \\
  = & \bigcap_{r=1}^k \bigcap_{odd\ i} \langle x_{i}^r, ( x_{i+1}x_{i+3}\cdots x_{2t})^{k+1-r}\rangle\\
  = & \bigcap_{odd\ i}\ \bigcap_{r=1}^k \langle x_{i}^r,(x_{i+1}x_{i+3}\cdots x_{2t})^{k+1-r}\rangle \\
  = & \bigcap_{odd\ i}\langle x_{i}^m(x_{i+1}x_{i+3}\cdots x_{2t})^{k-m}:\ 0\leq m\leq k\rangle.
\end{array}$$

Let $f\in J$ be an arbitrary monomial. Then by the above equalities, for any odd integer $1\leq i\leq 2t-1$, there exists an integer $0\leq m_i\leq k$ such that
\begin{equation}\label{mid}
 x_{i}^{m_i}(x_{i+1}x_{i+3}\cdots x_{2t})^{k-m_i}|f.
\end{equation}
Set $m'_i=\min\{m_1,m_3,\ldots,m_i\}$ for any odd integer $1\leq i\leq 2t-1$.
Then clearly $m'_i\leq m_i$ and $m'_1\geq m'_3\geq\cdots\geq m'_{2t-1}$. By relation (\ref{mid}), for any odd integer $i$, if $j\leq i$ and $j$ is odd, then we have $x_{i+1}^{k-m_j}|f$. So $x_{i+1}^{\max\{k-m_j:\ 1\leq j\leq i, j\ is\ odd \}}|f$. Note that $\max\{k-m_j:\ 1\leq j\leq i, j\ is\ odd \}=k-\min\{m_j:\ 1\leq j\leq i, j\ is\ odd\}=k-m'_i$. Therefore $x_{i+1}^{k-m'_i}|f$ for any odd integer $i$. Also by (\ref{mid}),
$x_i^{m_i}|f$ for any odd number $i$. So
\begin{equation}\label{mid2}
x_1^{m_1}x_3^{m_3}\cdots x_{2t-1}^{m_{2t-1}}x_2^{k-m'_1}x_4^{k-m'_3}\cdots x_{2t}^{k-m'_{2t-1}}|f.
\end{equation}

With the notations of Remarks \ref{rem1},
we have
 \begin{multline*}
   g_0^{k-m'_1}g_1^{m'_1-m'_3}g_2^{m'_3-m'_5}\cdots g_{t-1}^{m'_{2t-3}-m'_{2t-1}} g_t^{m'_{2t-1}} \\
  =x_1^{m'_1}x_3^{m'_3}\cdots x_{2t-1}^{m'_{2t-1}}x_2^{k-m'_1}x_4^{k-m'_3}\cdots x_{2t}^{k-m'_{2t-1}}.
 \end{multline*}
Since $m'_i\leq m_i$ for any odd number $i$, using relation  (\ref{mid2}), we have $$g_0^{k-m'_1}g_1^{m'_1-m'_3}g_2^{m'_3-m'_5}\cdots g_{t-1}^{m'_{2t-3}-m'_{2t-1}} g_t^{m'_{2t-1}}|\ f.$$

Also
\begin{multline*}
 g_0^{k-m'_1}g_1^{m'_1-m'_3}g_2^{m'_3-m'_5}\cdots g_{t-1}^{m'_{2t-3}-m'_{2t-1}} g_t^{m'_{2t-1}}\\
 \in I^{(k-m'_1)+(m'_1-m'_3)+\cdots+(m'_{2t-3}-m'_{2t-1})+m'_{2t-1}}=I^k.
\end{multline*}

Thus $f\in I^k$. So $J\subseteq I^k$. Thus
\begin{equation}\label{eqq1}
I^k=\bigcap_{r=1}^k \bigcap_{(i,j)}\langle x_i^r,x_j^{k+1-r}:\ i<j,\  \textit{$i$ is odd and $j$ is even}\rangle
\end{equation}
is a primary decomposition of $I^k$.
Also it is easy to see from \ref{eqq1} that $$\T{Ass}(I^k)=\{\langle x_{i_1},x_{i_2}\rangle:\ i_1<i_2,\ \textit{$i_1$ is odd and $i_2$ is even}\}.$$

\end{proof}

The following lemma is essential to prove Theorem \ref{main}.
\begin{lem}\label{lem2}
Assume that $n,t>1$ and $k$ are positive integers with $n\geq 2t$ and $P\in \T{Ass}(I^k)$. Suppose that $A=\{x_i: i\in [n], x_i\notin P\}$ and $P_n[A]=L_{s_1}\cup\cdots\cup L_{s_r}$, where $L_{s_1},\ldots,L_{s_r}$ are connected components of $P_n[A]$ with $s_1, \dots, s_r$ vertices respectively. Then $s_i$ is even for all $1\leq i\leq r$.
\end{lem}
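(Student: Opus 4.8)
The plan is to extract, from the membership $P \in \Ass(I^k)$, a monomial $u$ with $P = I^k : u$, and then argue that if some component $L_{s_m}$ of $P_n[A]$ has odd length $s_m$, one can enlarge the ``witness'' of $f u \in I^k$ to put $x_j$ into $I^k : u$ for the (unique, interior) endpoint-neighbour vertex $x_j$ of that component — contradicting $x_j \notin P$. First I would fix the monomial $u$ with $I^k : u = P$ and, given a monomial $f \in P = I^k : u$ that is a minimal generator witness (so $f u \in I^k$ and $v_1 \cdots v_k \mid f u$ with $v_i \in \mathcal G(I)$), look at how the generators $v_i$ meet $A$. The key combinatorial input is Remarks \ref{rem1}(i): inside a path on $s_m$ vertices an independent set has at most $\lceil s_m/2 \rceil$ vertices, and this bound is strict for odd $s_m$ in the sense that using $\lceil s_m / 2\rceil = (s_m+1)/2$ vertices forces the \emph{two} endpoints to be used. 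That asymmetry between the even and odd case is exactly what should be exploited.

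Next I would set up a parity/counting argument analogous to the proof of Lemma \ref{lem1}(ii): writing $B = \mathcal G(P)$, for any size-$t$ independent set $S$ one has $|S \cap B| = t - |S \cap A| \geq t - \sum_m \lceil s_m/2 \rceil$. If every $s_m$ were allowed to be arbitrary, then the value $\ell := t - \sum_m \lceil s_m/2\rceil$ would be the natural lower bound for $|B_i| := |\Supp(v_i) \cap B|$ in each of the $k$ generators, giving $\deg(\prod_i \mathbf x^{B_i}) \geq k\ell$. The point is to show that if some component has odd $s_m$ then this bound can be \emph{beaten by one} for at least one index $i$, or equivalently, that the monomial $u$ realizing $P = I^k : u$ cannot simultaneously have $x_j \notin (I^k : u)$ for the neighbour $x_j \in B$ of that odd component; this neighbour lies in $P$ only if $s_m$ is even. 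I expect the cleanest route is a direct contradiction: assume $s_m$ odd, take the neighbour vertex $x_j$ (which exists and lies in $B$ since $P \ne 0$, and at least one neighbour is interior because $2t - 2\ell = \sum s_m$ with not all terms accounting for a full end), and build explicitly — in the style of Cases 1–4 of Theorem \ref{inclusion1} — a factorization showing $x_j u \in I^k$, i.e. $x_j \in I^k : u = P$, contradiction. Building that factorization uses Lemma \ref{lem1}(iii) applied to the two sub-paths of $P_n[A \cup \{x_j\}]$: splitting an odd path of length $s_m$ at the added vertex $x_j$ into an even part and another part lets one distribute the ``extra'' vertex without losing independence.

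The main obstacle I anticipate is the bookkeeping needed to show that an odd component always supplies an \emph{interior} neighbour in $B$ and that reattaching it does not disturb the parity conditions at the other components — i.e. verifying that the analogue of Lemma \ref{lem1}(iii)'s hypothesis (no two consecutive chosen $x_{i_j}$'s) can be maintained globally after the local modification. A secondary subtlety is that $P$ is only known to be in $\Ass(I^k)$, not of the specific shape in Theorem \ref{inclusion1}; so I cannot assume the indices of $B$ have prescribed parities — that parity statement is, in effect, what this lemma together with Lemma \ref{lem1}(i) is being used to establish for Theorem \ref{main}. Hence the argument must be run purely from $P = I^k : u$ and the independence-number bound, with no a priori structure on $B$, which is why the degree-counting contradiction (rather than a structural classification) is the right tool here.
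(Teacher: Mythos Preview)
Your plan has the contradiction running in the wrong direction. You propose to take the neighbour vertex $x_j$ of an odd component --- a vertex that you yourself note lies in $B = \mathcal G(P)$ --- and then ``show $x_j u \in I^k$, i.e.\ $x_j \in I^k : u = P$, contradiction''. But $x_j \in B$ means $x_j \in P$ by definition, so there is nothing to contradict; the membership $x_j u \in I^k$ is automatic. What one actually needs is the reverse: starting from the known fact $x_j u \in I^k$ (since $x_j \in P$), produce from its factorization a monomial supported entirely on the odd component $L_{s_m} \subseteq A$ that lies in $I^k : u$, and \emph{that} contradicts the fact that no variable of $A$ lies in $P$. You also propose to build the needed factorizations via Lemma~\ref{lem1}(iii), but as you correctly observe in your last paragraph, that lemma presupposes the parity structure on $B$ --- which is exactly the content of the present lemma --- so invoking it here would be circular.

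The paper's argument is a direct local modification, with no appeal to Lemma~\ref{lem1}. Suppose $L_{s_m}$ is the path $x_{j+1},\dots,x_{j+2m+1}$ with $s_m = 2m+1$ odd and neighbour $x_j \in P$. Write $x_j u = u_1 \cdots u_k w$ with each $u_q \in \mathcal G(I)$ and $x_j \nmid w$; say $x_j \mid u_1$. If the other neighbour $x_{j+2m+2}$ is \emph{not} in $\Supp(u_1)$, replace $u_1$ by $v_1$ obtained by swapping $\Supp(u_1) \cap \{x_j,\dots,x_{j+2m+1}\}$ (a set of size at most $m+1$) for $\{x_{j+1},x_{j+3},\dots,x_{j+2m+1}\}$ (size exactly $m+1$); then $v_1 \in I$, and one checks that $(v_1 u_2 \cdots u_k):u$ divides $x_{j+1}x_{j+3}\cdots x_{j+2m+1}$, giving an element of $P$ supported on $A$. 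A symmetric argument handles the case where some generator in a factorization of $x_{j+2m+2}u$ contains $x_{j+2m+2}$ but not $x_j$. The remaining situation --- in which, in every such generator, the two neighbours $x_j$ and $x_{j+2m+2}$ occur only together --- is eliminated by comparing $\deg_u(x_j)$ and $\deg_u(x_{j+2m+2})$: one obtains each strictly less than the other. Your degree-counting outline (beating $k\ell$ by one) does not capture this local swap, and in particular misses this last subcase, which is precisely where both boundary neighbours of the odd component are needed.
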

\begin{proof}
Let $n\geq 2t$ and $P\in \T{Ass}(I^k)$. Then by \cite[Corollary 1.3.10]{HH} there exists $u\in \Mon(R)\setminus I^k$ such that
$P=I^k:u$. Note that $P$ is generated by some variables. If $P=\langle x_1, \dots, x_n\rangle$, then $s_i=0$ for all $i$ and there is nothing to prove. Let $P\neq \langle x_1, \dots, x_n\rangle$ and by contradiction assume that $s_i$ is odd for some $1\leq i\leq r$ and let $s_i=2m+1$. Let $L_{2m+1}:x_{j+1},x_{j+2},\ldots,x_{j+2m+1}$.

We consider three cases and in each case we get a contradiction.

\textbf{Case 1}. Let $j\neq 0$ and $j+2m+1\neq n$.

Then we have $x_j,x_{j+2m+2}\in P=I^k:u$. Thus there exist $u_1,\ldots,u_k\in \mathcal{G}(I)$ and $w\in \T{Mon}(R)$ such that $x_ju=u_1\cdots u_kw$ and there exist $u'_1,\ldots,u'_k\in \mathcal{G}(I)$ and $w'\in \T{Mon}(R)$ such that $x_{j+2m+2}u=u'_1\cdots u'_kw'$. Since $u\notin I^k$, we have
\begin{equation}\label{16}
x_j\nmid w \ \T{and} \  x_{j+2m+2}\nmid w'.
\end{equation}
So $x_j|u_1\cdots u_k$ and $x_{j+2m+2}|u'_1\cdots u'_k$. Three cases may happen. In each case we get a contradiction.

\textbf{Subcase 1}. There exists $1\leq q\leq k$ such that $x_j|u_q$ and $x_{j+2m+2}\nmid u_q$.

Without loss of generality assume that $q=1$. So $x_j|u_1$ and $x_{j+2m+2}\nmid u_1$. Note that $\Supp(u_1)$ is an independent set of $P_n$ of size $t$. Also $|\Supp(u_1)\cap \{x_j,x_{j+1},\ldots,x_{j+2m+1}\}|\leq m+1$, since any independent set of $P_n$ which is contained in $\{x_j,x_{j+1},\ldots,x_{j+2m+1}\}$ has at most $m+1$ elements. Set $$v_1=x_{j+1}x_{j+3}\cdots x_{j+2m+1}u_1/\gcd(u_1, \prod_{i=j}^ {j+2m+1}x_i).$$
Then
$$\Supp(v_1)=(\Supp(u_1)\setminus \{x_j,x_{j+1},\ldots,x_{j+2m+1}\})\cup \{x_{j+1},x_{j+3},\ldots,x_{j+2m+1}\}.$$
Since $x_{j+2m+2}\nmid u_1$, $\Supp(v_1)$ is an independent set of $P_n$. Also $|\Supp(v_1)|\geq t$, since at most $m+1$ elements has been removed from $\Supp(u_1)$ and exactly $m+1$ elements have been added to $\Supp(u_1)$ to get $\Supp(v_1)$. So $v_1\in I$. Set $v=v_1u_2u_3\cdots u_k$.
Then $v\in I^k$ and hence $v:u\in I^k:u=P$. Since $u=(u_1/x_j)u_2u_3\cdots u_k w$, we have $\gcd(v,u)=u_2u_3\cdots u_k\gcd(v_1,(u_1/x_j)w)$ and since $x_j\nmid v_1$, $\gcd(v_1,(u_1/x_j)w)=\gcd(v_1,u_1w)$.
Therefore
\begin{align*}
v:u&=v/\gcd(v,u)\\
&=v_1u_2u_3\cdots u_k/(u_2u_3\cdots u_k\gcd(v_1,u_1w))\\
&=v_1/\gcd(v_1,u_1w).
\end{align*}
We have $v_1/\gcd(v_1,u_1w)|\ v_1/\gcd(v_1,u_1)$.  It is easy to see that $$v_1/\gcd(v_1,u_1)|x_{j+1}x_{j+3}\cdots x_{j+2m+1}.$$ So $v:u\ |x_{j+1}x_{j+3}\cdots x_{j+2m+1}$. But $v:u\in P$, which means that there is $x_{\lambda}\in P$ such that $x_{\lambda}|v:u$, so $x_{\lambda}|x_{j+1}x_{j+3}\cdots x_{j+2m+1}$ which is impossible, since $x_{j+1},x_{j+3},\ldots ,x_{j+2m+1}\in A$ and hence are not in $P$. So in this case we get a contradiction.

\textbf{Subcase 2}. There exists $1\leq q\leq k$ such that $x_{j+2m+2}| u'_q$ and $x_j\nmid u'_q$.
By similar argument to Case 1, one can get a contradiction.

\textbf{Subcase 3}. For any $1\leq q\leq k$, if $x_j|u_q$, then $x_{j+2m+2}|u_q$ and if $x_{j+2m+2}| u'_q$, then $x_j| u'_q$.

This assumption  together with relation (\ref{16}) imply that

\begin{align*}
\deg _u(x_j)< \deg _{u}(x_j)+1= \deg _{x_ju}(x_j)\leq \deg _{x_ju}(x_{j+2m+2})=\deg _u(x_{j+2m+2}),
\end{align*}
and
\begin{multline*}
 \deg _u(x_{j+2m+2})<\deg _u(x_{j+2m+2})+1=\deg _{x_{j+2m+2}u}(x_{j+2m+2}) \\
 \leq \deg _{x_{j+2m+2}u}(x_{j})=\deg _u(x_j)
\end{multline*}

which is a contradiction.

\textbf{Case 2}. Let $j=0$. Then we have $2m+1<n$, since $P\neq (0)$. Also $L_{2m+1}:x_{1},x_{2},\ldots,x_{2m+1}$ is an induced path of $P_n[A]$, $x_{2m+2}\in P$ and $x_i\notin P$ for any $1\leq i\leq2m+1$. Hence there exist $u_1,\ldots,u_k\in \mathcal{G}(I)$ and $w\in \T{Mon}(R)$ such that
\begin{equation}\label{eq3}
x_{2m+2}u=u_1\cdots u_kw.
\end{equation}

Since $u\notin I^k$, we have $x_{2m+2}\nmid w$. Without loss of generality assume that $x_{2m+2}|u_1$. Set
\[u'_1=x_1x_3\cdots x_{2m+1}(u_1/\gcd(u_1,\prod_{i=1}^{2m+2} x_i)).
\]
Since $\Supp(u_1)$ is an independent set of $P_n$, we have $|\Supp(u_1)\cap\{x_i:\ 1\leq i\leq 2m+2\}|\leq m+1$.
Thus
 \[
 \Supp(u'_1)=(\Supp(u_1)\setminus \{x_i:\ 1\leq i\leq 2m+2\})\cup\{x_{1},x_{3},\ldots, x_{2m+1}\}
  \]
  is an independent set of $P_n$ of size at least $t$ and then $u'_1\in I$. Therefore
\[
u'_1u_2\cdots u_kw=x_1x_3\cdots x_{2m+1}(u_1/\gcd(u_1,\prod_{i=1}^{2m+2} x_i))u_2\cdots u_kw\in I^k.
 \]
Thus $x_1x_3\cdots x_{2m+1}(u_1/x_{2m+2})u_2\cdots u_kw\in I^k$, since $x_{2m+2}|\gcd(u_1,\prod_{i=1}^{2m+2} x_i)$. Therefore using \ref{eq3} we have $x_1x_3\cdots x_{2m+1}u\in I^k$ and hence
$x_1x_3\cdots x_{2m+1}\in I^k:u=P$. Since $P$ is a prime ideal, we have $x_i\in P$ for some $1\leq i\leq 2m+1$, which is a contradiction.

\textbf{Case 3}. Let $j+2m+1=n$. Since $P\neq (0)$, we have $j\neq 0$. Also $x_j\in P$ and $x_i\notin P$ for any $j+1\leq i\leq j+2m+1$.
There exist $u_1,\ldots,u_k\in \mathcal{G}(I)$ and $w\in \T{Mon}(R)$ such that
\begin{equation}\label{eq4}
x_ju=u_1\cdots u_kw.
\end{equation}
Since $u\notin I^k$, we have $x_j\nmid w$. Without loss of generality assume that $x_j|u_1$. Set
\[u'_1=x_{j+1}x_{j+3}\cdots x_{j+2m+1}(u_1/\gcd(u_1,\prod_{i=j}^{j+2m+1} x_i)).
\]
Since $\Supp(u_1)$ is an independent set of $P_n$, we have $|\Supp(u_1)\cap\{x_i:\ j\leq i\leq j+2m+1\}|\leq m+1$. Moreover $x_j\in \Supp(u_1)$.
 Thus
 \[
 \Supp(u'_1)=(\Supp(u_1)\setminus \{x_i:\ j\leq i\leq j+2m+1\})\cup\{x_{j+1},x_{j+3},\ldots, x_{j+2m+1}\}
  \]
  is an independent set of $P_n$ of size at least $t$ and then
  $u'_1\in I$. Therefore
\[
u'_1u_2\cdots u_kw=x_{j+1}x_{j+3}\cdots x_{j+2m+1}(u_1/\gcd(u_1,\prod_{i=j}^{j+2m+1} x_i))u_2\cdots u_kw\in I^k.
 \]
Thus $x_{j+1}x_{j+3}\cdots x_{j+2m+1}(u_1/x_j)u_2\cdots u_kw\in I^k$, since $x_j|\gcd(u_1,\prod_{i=j}^{j+2m+1} x_i)$. Therefore using \ref{eq4} we have $x_{j+1}x_{j+3}\cdots x_{j+2m+1}u\in I^k$ and hence
$x_{j+1}x_{j+3}\cdots x_{j+2m+1}\in I^k:u=P$. Since $P$ is a prime ideal, we have $x_i\in P$ for some $j+1\leq i\leq j+2m+1$, which is a contradiction.

\end{proof}

Now, we state the proof of Theorem \ref{main}. We remind that $I\neq 0$ if and only if $n\geq 2t-1$.

\textbf{Proof of Theorem \ref{main}}.
$\textbf{(i)}$ It is easy to see that if $n=2t-1$, then $I=\langle x_1x_3x_5\cdots x_{2t-1}\rangle$. So for any $k\geq1$, $I^k=\langle x_1^kx_3^kx_5^k\cdots x_{2t-1}^k\rangle$ and then $I^k=\langle x_1^k\rangle\cap\langle x_3^k\rangle\cap \cdots \cap \langle x_{2t-1}^k\rangle$ is a minimal primary decomposition of $I^k$. Therefore $$\T{Ass}(I^k)=\T{Ass}(I)=\{\langle x_1\rangle,\langle x_3\rangle,\ldots,\langle x_{2t-1}\rangle\}.$$

$\textbf{(ii)}$ It is the statement of Theorem \ref{2t}.

$\textbf{(iii)}$  Assume that $P\in \T{Ass}(I^k)$. Following the notation of Lemma \ref{lem2} let $A=\{x_i:  i\in [n], x_i\notin P\}$ and $s_1,\ldots,s_r$ be as defined there. First we show that $|A|\leq 2t-2$. By contradiction assume that $|A|\geq 2t-1$ and let $A=\{x_{j_1},x_{j_2},\ldots,x_{j_{2t-1}},\dots\}$ such that $j_1<j_2<\cdots$. Then $\{x_{j_1},x_{j_3},x_{j_5},\ldots,x_{j_{2t-1}}\}$ is an independent set of $P_n$ of size $t$. So we have $f=x_{j_1}x_{j_3}x_{j_5}\cdots x_{j_{2t-1}}\in \mathcal{G}(I)$. Thus $f^k:u\in I^k:u=P$. So there is $x_i\in P$ such that $x_i|\ f^k:u$.
Clearly $f^k:u| f^k=x_{j_1}^k x_{j_3}^k x_{j_5}^k\cdots x_{j_{2t-1}}^k$. So $x_i|x_{j_1}^k x_{j_3}^k x_{j_5}^k\cdots x_{j_{2t-1}}^k$. But $x_{j_1},x_{j_3},x_{j_5},\ldots,x_{j_{2t-1}}\notin P$, which is a contradiction. Thus $|A|\leq 2t-2$. On the other hand, in view of Lemma \ref{lem2}, $|A|=\sum_{i=1}^r s_i$ is even. Hence there is an integer $\ell\geq 1$ such that $|A|=2t-2\ell$. So we may assume that $P=\langle x_{i_1},x_{i_2},\ldots,x_{i_{n-2t+2\ell}}\rangle$ such that $i_1<i_2<\cdots<i_{n-2t+2\ell}$ for some integer $1\leq \ell \leq t$.

Now we are going to show that $i_j$ and $j$ have the same parity. To this aim, first we show that $i_1$ is odd. If $i_1$ is even, then $i_1>1$ and then $L_{s_1}:x_1,\ldots,x_{i_1-1}$. So $s_1=i_1-1$ and then $s_1$ is odd, which is a contradiction by Lemma \ref{lem2}. So $i_1$ is odd. Now, we show that $i_{j+1}-i_j$ is odd for any $j\geq 1$. If $i_{j+1}-i_j=1$, then we are done. So let $i_{j+1}-i_j>1$. Then $i_j+1<i_{j+1}$, $x_{i_j+1},x_{i_j+2},\ldots,x_{i_{j+1}-1}\notin P$ and $L:x_{i_j+1},x_{i_j+2},\ldots,x_{i_{j+1}-1}$ is a connected component of $P_n[A]$ and hence $L=L_{s_d}$ for some $d$. So $s_d=i_{j+1}-i_j-1$. But $s_d$ is even by Lemma \ref{lem2}, so $i_{j+1}-i_j$ is odd. Since $i_1$ is odd and for all $j\geq 1$, $i_{j+1}-i_j$ is odd, $i_j$ and $j$ have the same parity for all $j$ as desired.

So
\begin{multline*}
  P\in \{\langle x_{i_1},x_{i_2},\ldots,x_{i_{n-2t+2\ell}}\rangle:\ i_1<i_2<\cdots<i_{n-2t+2\ell},\\
   1\leq \ell \leq \min\{t,k\}, \forall\ 1\leq j\leq  n-2t+2\ell,\ i_j\ \textit{and}\ j\ \textit{have the same parity}\}.
\end{multline*}
Now, Theorem \ref{inclusion1} completes the proof.





As an immediate corollary of Theorem \ref{main}, we can obtain $\astab(I)$ and $\T{Ass}^{\infty}(I)$ explicitly.

\begin{cor}\label{cor1}
Let $n$ and $t>1$ be positive integers such that $n\geq 2t-1$. Then $I$ has the persistence property. Also
\begin{itemize}
  \item[(i)] If $n=2t-1$ or $n=2t$, then $\astab(I)=1$, hence $I$ is normally torsion-free and $\T{Ass}^{\infty}(I)=\T{Ass}(I)$.
  \item[(ii)]If $n>2t$, then $\astab(I)=t$ and
  \begin{multline*}
    \T{Ass}^{\infty}(I)=\{\langle x_{i_1},x_{i_2},\ldots,x_{i_{n-2t+2\ell}}\rangle:\ i_1<i_2<\cdots<i_{n-2t+2\ell},  \\
   1\leq \ell \leq t, \forall \ 1\leq j\leq  n-2t+2\ell,\ i_j\ \textit{and}\ j\ \textit{have the same parity}\}.
  \end{multline*}
\end{itemize}
\end{cor}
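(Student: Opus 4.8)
The statement to prove is Corollary \ref{cor1}, which is essentially a repackaging of Theorem \ref{main}. Let me sketch how I would prove it.

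The plan is to derive everything directly from the explicit description of $\T{Ass}(I^k)$ given in Theorem \ref{main}, so the proof is mostly bookkeeping once that theorem is in hand.

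First, for the persistence property, I would argue case by case according to the three parts of Theorem \ref{main}. In cases (i) and (ii), $\T{Ass}(I^k) = \T{Ass}(I)$ for all $k$, so the ascending chain condition $\T{Ass}(I^k) \subseteq \T{Ass}(I^{k+1})$ holds trivially (with equality). In case (iii), I would observe that the description of $\T{Ass}(I^k)$ depends on $k$ only through the bound $1 \leq \ell \leq \min\{t,k\}$ on the parameter $\ell$. Since $\min\{t,k\} \leq \min\{t,k+1\}$, every prime ideal appearing in $\T{Ass}(I^k)$ — being of the form $\langle x_{i_1},\ldots,x_{i_{n-2t+2\ell}}\rangle$ for some valid $\ell$ with $1 \leq \ell \leq \min\{t,k\}$ — also satisfies $1 \leq \ell \leq \min\{t,k+1\}$ and hence lies in $\T{Ass}(I^{k+1})$. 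This gives $\T{Ass}(I^k) \subseteq \T{Ass}(I^{k+1})$.

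Next, for $\astab(I)$ and $\T{Ass}^\infty(I)$: in cases (i) and (ii) we already have $\T{Ass}(I^k) = \T{Ass}(I)$ for every $k \geq 1$, which is precisely the statement that $I$ is normally torsion-free; hence $\astab(I) = 1$ and $\T{Ass}^\infty(I) = \T{Ass}(I)$. In case (iii), the set $\T{Ass}(I^k)$ stabilizes exactly when $\min\{t,k\}$ stabilizes, i.e. when $k \geq t$, at which point the bound becomes $1 \leq \ell \leq t$ and $\T{Ass}(I^k)$ equals the set displayed in part (ii) of the corollary; this is $\T{Ass}^\infty(I)$. To pin down $\astab(I) = t$ I would additionally need that $\T{Ass}(I^{t-1}) \subsetneq \T{Ass}(I^t)$, i.e. that the chain does not stabilize before step $t$; for this it suffices to exhibit one prime ideal with $\ell = t$ that lies in $\T{Ass}(I^t)$ but not in $\T{Ass}(I^{t-1})$. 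Since $n > 2t$, taking $\ell = t$ gives a prime of the form $\langle x_{i_1},\ldots,x_{i_n}\rangle = \langle x_1,\ldots,x_n\rangle$, which indeed requires $\ell = t$ (hence $k \geq t$) to appear, so it belongs to $\T{Ass}(I^t) \setminus \T{Ass}(I^{t-1})$. This confirms $\astab(I) = t$.

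The proof has essentially no obstacle — it is a direct unwinding of Theorem \ref{main}. The only point requiring a moment's care is the lower bound $\astab(I) \geq t$ in case (iii), namely verifying that some associated prime genuinely first appears at power $t$; the maximal ideal $\langle x_1,\ldots,x_n\rangle$ (corresponding to $\ell = t$, $A = \emptyset$) is the cleanest witness.
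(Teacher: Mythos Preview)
Your proposal is correct and follows exactly the approach the paper intends: the paper states Corollary \ref{cor1} as an immediate consequence of Theorem \ref{main} and gives no separate proof, and your argument is precisely the routine unpacking of that theorem (including the observation that the maximal ideal, corresponding to $\ell=t$, witnesses $\astab(I)\geq t$ when $n>2t$).
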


\providecommand{\bysame}{\leavevmode\hbox
to3em{\hrulefill}\thinspace}

\end{document}